\newlength{\depthofsumsign}
\let\I\@undefined
\DeclareMathOperator{\IKM}{\mathbf{IKM}}
\DeclareMathOperator{\ParH}{\mathsf{Par}}
\DeclareMathOperator{\HPB}{\mathsf{HPB}}
\DeclareMathOperator{\HT}{\widehat{\mathscr H}}
\DeclareMathOperator{\sgn}{sgn}
\DeclareMathOperator{\D}{d}
\DeclareMathOperator{\I}{Im}
\def\eor{\hfill$ \square$}
\theoremstyle{plain}
\newtheorem{theorem}{Theorem}[section]
\newtheorem{proposition}[theorem]{Proposition}
\newtheorem{lemma}[theorem]{Lemma}
\newtheorem{conjecture}[theorem]{Conjecture}
\newenvironment{remark}[1][Remark]{\begin{trivlist}
\item[\hskip \labelsep {\bfseries #1}]}{\end{trivlist}}
\theoremstyle{definition}
\newtheorem{definition}[theorem]{Definition}
\numberwithin{equation}{section}
\begin{document}

\selectlanguage{english}
\title{Hilbert  Transforms and Sum Rules of Bessel Moments}
\author{Yajun Zhou}
\address{Program in Applied and Computational Mathematics (PACM), Princeton University, Princeton, NJ 08544; Academy of Advanced Interdisciplinary Sciences (AAIS), Peking University, Beijing 100871, P. R. China }
\email{yajunz@math.princeton.edu, yajun.zhou.1982@pku.edu.cn}\thanks{\textit{Keywords}:  Hilbert transforms, Bessel functions, Feynman integrals\\\indent\textit{MSC 2010}:  44A15, 33C10, 33C20 (Primary) 81T18, 81T40, 81Q30  (secondary)\\\indent * This research was supported in part  by the Applied Mathematics Program within the Department of Energy
(DOE) Office of Advanced Scientific Computing Research (ASCR) as part of the Collaboratory on
Mathematics for Mesoscopic Modeling of Materials (CM4)}
\date{\today}

\maketitle

\begin{center}\footnotesize{\textit{To the memory of Jonathan M. Borwein }(1951--2016)}\end{center}\vspace{-1em}
\begin{abstract}
    Using Hilbert transforms, we establish two families of sum rules   involving    Bessel moments, which are  integrals  associated with Feynman diagrams in two-dimensional quantum field theory. With these linear relations among Bessel moments, we  verify and generalize two conjectures by Bailey--Borwein--Broadhurst--Glasser and Broadhurst--Mellit. \end{abstract}



\pagenumbering{arabic}

\section{Introduction}
Let $I_0$ and $K_0$ be the modified Bessel functions of zeroth order, defined through the following Schl\"afli integral representations \cite[][\S6.15]{Watson1944Bessel}:\begin{align} I_0(t)=\frac{1}{\pi}\int_0^\pi e^{t\cos\theta}\D\theta,\quad K_0(t)=\int_0^\infty e^{-t\cosh u}\D u.\end{align}Bessel moments,  in the form of absolutely convergent integrals  \begin{align} \IKM(a,b;c):=\int_0^\infty[I_0(t)]^a[K_0(t)]^{b}t^{c}\D t\end{align} for certain non-negative integers $a,b,c$, arise naturally from perturbative diagrammatic expansions in two-dimensional quantum field theory \cite{Groote2007,Bailey2008,Broadhurst2016,BroadhurstMellit2016}.

The   Bessel moments
$\IKM(a,b;c)$ involving no more than four Bessel functions (that is, $a+b\leq4$) have been thoroughly studied  \cite{Bailey2008}: arithmetically speaking, the  $ \IKM(a,b;1)$ values are expressible as rational multiples of  special $L$-values when $a+b\leq4$; combinatorially speaking,    generating functions for the corresponding sequences $ \IKM(a,b;c)$  with respect to $c\in\mathbb Z_{>0}$ are   explicitly known. In contrast, for  $a+b\geq5$, the combinatorial structure of Bessel moments $ \IKM(a,b;c)$   and their relation to  $L$-functions largely remain elusive. For the $a+b=5$ case alone, closed-form evaluations of $ \IKM(2,3;1)$ and $ \IKM(1,4;1)$  have drawn heavily on  critical advances in symbolic computation \cite{Bailey2008}, algebraic geometry \cite{BlochKerrVanhove2015}
and number theory \cite{RogersWanZucker2015,Samart2016} within the last decade. A recent progress towards the understanding of Bessel moments with $a+b=8,c=1$ (see \cite[][\S7.6]{Broadhurst2016} and \cite[][\S7]{BroadhurstMellit2016})
 has benefited from Yun's insights  \cite{Yun2015} into the Langlands program.

On a different note, numerical experimentations seem to support the existence of various algebraic relations among Bessel moments with   $a+b\geq5$. As examples of sum rules  suggested from high-precision numerical computations, we mention  here two open problems concerning Bessel moments: a cancelation formula   proposed by Bailey--Borwein--Broadhurst--Glasser in 2008, and  a conjecture on integrality  formulated by Broadhurst (jointly with Mellit, and in honor of Crandall) in 2016.

\begin{conjecture}[B$^3$G sum rule {\cite[][``final conjecture'', (220)]{Bailey2008}}]\label{conj:BBBG}For each pair of integers $ (n,k)\in\mathbb Z^2$ satisfying $n\geq2k\geq2$, the following sum of integrals vanishes identically:\begin{align}
Z_{2n,n-2k}:={}&\sum_{m=0}^{\lfloor n/2\rfloor}(-1)^m{n\choose{2m}}\int_0^\infty [\pi I_0(t)]^{n-2m}[K_0(t)]^{n+2m}t^{n-2k}\D t=0,
\end{align}where ${n\choose j}=\frac{n!}{j!(n-j)!}$ and $ \lfloor x\rfloor$ represents the greatest integer less than or equal to $x$.\end{conjecture}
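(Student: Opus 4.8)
The plan is to collapse the alternating binomial sum into a single Bessel moment with a complex integrand, and then to annihilate it by a contour-integration (dispersion-relation) argument that is the concrete incarnation of the Hilbert-transform philosophy behind the paper.

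First I would factor $[K_0(t)]^{n+2m}=[K_0(t)]^{n}[K_0(t)]^{2m}$ and recognize the remaining alternating sum as a real part: since $\R[(x+iy)^n]=\sum_{m=0}^{\lfloor n/2\rfloor}(-1)^m\binom{n}{2m}x^{n-2m}y^{2m}$, the choice $x=\pi I_0(t)$, $y=K_0(t)$ gives
\begin{align}
Z_{2n,n-2k}=\int_0^\infty [K_0(t)]^{n}\,\R\big[(\pi I_0(t)+iK_0(t))^{n}\big]\,t^{n-2k}\,\D t .
\end{align}
The crucial input is the monodromy relation $K_0(te^{\pm i\pi})=K_0(t)\mp i\pi I_0(t)$, the jump of $K_0$ across its branch cut, obtainable from the $\nu\to0$ limit of $K_\nu(ze^{m\pi i})=e^{-m\nu\pi i}K_\nu(z)-i\pi\frac{\sin m\nu\pi}{\sin\nu\pi}I_\nu(z)$. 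This rewrites $\pi I_0(t)+iK_0(t)=iK_0(te^{i\pi})$, so that $Z_{2n,n-2k}=\R[i^{n}J]$ with $J:=\int_0^\infty \big[K_0(t)K_0(te^{i\pi})\big]^{n}t^{n-2k}\,\D t$. The hypothesis $n\ge 2k\ge 2$ is precisely what makes $J$ absolutely convergent: since $K_0(t)K_0(te^{i\pi})=[K_0(t)]^2-i\pi I_0(t)K_0(t)\sim -\tfrac{i\pi}{2t}$, the integrand decays like $t^{-2k}$ at infinity, while $n-2k\ge 0$ tames the logarithmic singularity at the origin.

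The heart of the argument is to extract one linear relation for $J$ from Cauchy's theorem. I would introduce the auxiliary function
\begin{align}
\Phi(z):=\big[K_0(z)\,K_0(ze^{-i\pi})\big]^{n}\,z^{n-2k},
\end{align}
which is single-valued and holomorphic on the open upper half-plane, because $K_0(z)$ is holomorphic off $(-\infty,0]$, $K_0(ze^{-i\pi})$ off $[0,\infty)$, and $z^{n-2k}$ is entire as $n-2k\in\mathbb{Z}_{\ge0}$. Integrating $\Phi$ around the upper semicircular contour and reading off the boundary values from the same monodromy relation, one finds for $t>0$ the integrand $[K_0(t)(K_0(t)+i\pi I_0(t))]^{n}t^{n-2k}$, whose integral is $\overline{J}$, and for $t<0$, writing $t=-\tau$, the integrand $(-1)^n[K_0(\tau)K_0(\tau e^{i\pi})]^{n}\tau^{n-2k}$, whose integral is $(-1)^nJ$. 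As $\Phi$ decays like $|z|^{-2k}$ on the large arc and the indentation at the origin is harmless, Cauchy's theorem gives $\overline{J}+(-1)^nJ=0$, i.e. $\overline{J}=(-1)^{n+1}J$. Feeding this into $Z_{2n,n-2k}=\R[i^nJ]=\tfrac12\big(i^nJ+(-i)^n\overline{J}\big)$ and using $(-i)^n(-1)^{n+1}=-i^n$ yields $Z_{2n,n-2k}=\tfrac12(i^nJ-i^nJ)=0$, as claimed.

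The main obstacle I anticipate is the rigorous justification of the contour manipulation rather than any algebraic identity. Making the large arc vanish requires the uniform asymptotics of $K_0$ throughout the closed sector $0\le\arg z\le\pi$: along $z=Re^{i\theta}$ the two factors behave like $R^{-1/2}e^{-R\cos\theta}$ and $R^{-1/2}e^{+R\cos\theta}$, and these exponentials must be shown to cancel uniformly (including at $\theta=\pi/2$, where one instead invokes the $J_0,Y_0$ asymptotics) so that $\Phi=O(R^{-2k})$ and the arc contributes $O(R^{1-2k})\to0$; this borderline decay is exactly why $k\ge1$ is indispensable. A secondary technical point is the careful bookkeeping of branches, so that the boundary values on the two half-axes genuinely assemble into $\overline{J}$ and $(-1)^nJ$ with the correct sign, together with the verification that the $(\log z)^{2n}z^{n-2k}$ behavior near $z=0$ renders the small indentation negligible. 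Once these analytic estimates are secured, the sum rule follows for all admissible $(n,k)$ simultaneously, with no case distinctions and no appeal to closed-form evaluations.
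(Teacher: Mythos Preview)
Your proof is correct and takes a genuinely different route from the paper's main argument. The paper builds an inductive machine (``Hilbert ladders'' $\zeta_\ell,\eta_\ell$) via the Hardy--Poincar\'e--Bertrand formula for Hilbert transforms, proving $\HT(\zeta_\ell\varpi_j)=\eta_\ell\varpi_j$ for $0\le j<\ell$ and then reading off $\int_{\mathbb R}\zeta_n(x)x^j\D x=0$ from the moment formula; you instead go directly via a single contour integral of $[K_0(z)K_0(ze^{-i\pi})]^n z^{n-2k}$ over the upper half-plane and the elementary symmetry $\overline J=(-1)^{n+1}J$. Your approach is essentially the one the paper itself acknowledges in the Remark following Theorem~\ref{thm:B3G} (equation~\eqref{eq:BBBG_Hankel}), phrased there via cylindrical Hankel functions along the imaginary axis and described as ``actually simpler.'' The trade-off is also spelled out there: the contour argument proves the B$^3$G sum rule cleanly but hits infrared divergence once the power of $t$ reaches $n-1$ or higher, so it does not extend to the Broadhurst--Mellit integer sequence of \S\ref{subsec:B_int_seq}; the paper's Hilbert-transform framework, though heavier, is what carries over to those higher moments. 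Your flagged technical points (uniform $O(1/z)$ decay of $K_0(z)K_0(ze^{-i\pi})$ on the large arc, harmlessness of the logarithmic singularity at $z=0$) are genuine but routine, and your identification of $k\ge1$ as the sharp decay threshold matches the paper's constraint exactly.
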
\begin{conjecture}[Broadhurst--Mellit  integer sequence {\cite[][(149) in Conjecture 5]{Broadhurst2016}}, \textit{viz.}\ Crandall numbers]\label{conj:B_int_seq}For each $n\in\mathbb Z_{>0}$, the following integral\begin{align}
A(n):={}&\left(\frac{2}{\pi}\right)^4
\int_0^\infty\left\{[\pi I_0(t)]^2 - [K_0(t)]^2\right\} I_0(t)[K_0(t)]^5\,(2t)^{2n-1} \D t
\label{eq:An}
\end{align}evaluates to an integer.\end{conjecture}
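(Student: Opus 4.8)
The plan is to convert the claim into an arithmetic statement about Bessel moments and then force integrality through a recurrence in $n$. Expanding the integrand via $\{[\pi I_0(t)]^2-[K_0(t)]^2\}I_0(t)[K_0(t)]^5=\pi^2[I_0(t)]^3[K_0(t)]^5-I_0(t)[K_0(t)]^7$ and absorbing the powers of $2$, I would first record
\begin{align}
A(n)=2^{2n+3}\left[\frac{\IKM(3,5;2n-1)}{\pi^2}-\frac{\IKM(1,7;2n-1)}{\pi^4}\right].
\end{align}
Thus $A(n)\in\mathbb Z$ is a statement about one specific, $\pi$-normalized combination of the eight-Bessel moments of odd weight $c=2n-1$, precisely the $a+b=8$ regime the Introduction flags as hard. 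The factors $\pi^{-2},\pi^{-4}$ are what render the combination rational, while the exact power $2^{2n+3}$ is what I expect to clear denominators; both normalizations must be tracked exactly rather than up to constants.

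Next I would assemble the Hilbert-transform input named in the title. The crucial analytic fact is the monodromy relation $K_0(e^{\pm i\pi}t)=K_0(t)\mp i\pi I_0(t)$ for $t>0$, which exhibits $\pi I_0$ as the jump of $K_0$ across its branch cut, hence as a Hilbert conjugate of $K_0$. Raising $(K_0- i\pi I_0)$ to the eighth power and separating real and imaginary parts organizes the four odd moments $\IKM(2j+1,7-2j;2n-1)$, $j=0,1,2,3$, into Hilbert-transform pairings; integrating $[K_0(z)]^8z^{2n-1}$ along rotated rays and reading off the principal-value (imaginary) contribution should yield linear \emph{sum rules} among these odd moments, of exactly the species that also underlies the B$^3$G cancelations of Conjecture~\ref{conj:BBBG}. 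I would use these relations to cut down the number of independent odd moments and thereby collapse the combination in the display above to a single scalar sequence.

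I would then extract a recurrence in $n$. Each product $[I_0(t)]^a[K_0(t)]^{8-a}$ is annihilated by the order-nine symmetric-power operator $\mathcal L_8$ of the modified Bessel operator, whose coefficients are integer polynomials in $t$. Since $[I_0]^3[K_0]^5\sim e^{-2t}$ and $I_0[K_0]^7\sim e^{-6t}$ decay and the integrands are harmless at $t=0$, all boundary terms vanish, so Mellin inversion turns $\mathcal L_8$ into a recurrence with integer-polynomial-in-$n$ coefficients satisfied by every $\IKM(a,8-a;2n-1)$ and hence by $A(n)$; because $\mathcal L_8$ only involves even powers of $t$, the recurrence stays within odd weights. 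Feeding the Hilbert-transform sum rules into it should lower the effective order and, after the $2^{2n+3}$ and $\pi$ normalization, leave a recurrence with integer coefficients and unit (or at worst $\pm$power-of-two) leading coefficient; integer base cases $A(1),A(2),\dots$ then follow from the known weight-one evaluations of the eight-Bessel moments.

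The main obstacle is this last arithmetic step: proving that the recurrence \emph{propagates} integrality rather than mere rationality. The bare $\mathcal L_8$ recurrence carries a nontrivial leading polynomial in $n$, and dividing through introduces denominators; the entire content is that the Hilbert-transform sum rules together with the exact $2^{2n+3}/\pi^{\,\cdot}$ normalization conspire to cancel those denominators for \emph{every} $n$. Establishing this cancellation uniformly—equivalently, pinning down the exact denominator of $\pi^{-2}\IKM(3,5;2n-1)-\pi^{-4}\IKM(1,7;2n-1)$—is the crux, since finitely many base-case checks cannot settle it. I therefore expect to need either a clean factorization of the reduced recurrence's leading coefficient or an arithmetic model of these $a+b=8$ moments (for instance a Hecke- or period-theoretic reading, in the spirit of the Yun-type input cited in the Introduction) that makes the integrality structural rather than incidental.
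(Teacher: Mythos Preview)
Your proposal has a real gap at exactly the point you flag as ``the main obstacle.'' The recurrence coming from the symmetric-power operator $\mathcal L_8$ carries a nontrivial leading polynomial in $n$, and you supply no mechanism for showing that division by it preserves integrality uniformly in $n$; the remedies you float (a clean factorization of the leading coefficient, or an arithmetic/period model of the $a+b=8$ moments) are themselves unresolved problems, not tools you can invoke. As you yourself note, finitely many base cases cannot close this, so the argument as outlined does not establish the conjecture.

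The paper avoids this obstacle by using the Hilbert transform in a different direction. Instead of staying at $a+b=8$ and recursing in the weight $c=2n-1$, it applies Parseval-type identities of the form $\ParH(\,\cdot\,,\kappa^2\varpi_{2n})$ to \emph{descend in the number of Bessel factors}: the combination $\eta_4\varpi_{2n+1}$ defining $A(n+1)$ is pushed from eight Bessel factors down to six, then four, then two, each step introducing a convolution against the known moments $\int_0^\infty[K_0(t)]^2t^{2j}\D t=\pi^2[(2j)!]^3/(4^{3j+1}(j!)^4)$. The output is the explicit closed form
\begin{align}
A(n+1)=\frac{1}{2^{4(n-1)}}\sum_{m=1}^{n}\sum_{\ell=1}^{m}\sum_{k=1}^{\ell}
\frac{[(2n-2m)!]^3}{[(n-m)!]^4}\,\frac{[(2m-2\ell)!]^3}{[(m-\ell)!]^4}\,\frac{[(2\ell-2k)!]^3}{[(\ell-k)!]^4}\,\frac{[(2k-2)!]^3}{[(k-1)!]^4},
\end{align}
which is recognized as the coefficient of $x^{n-1}$ in $\bigl(\sum_{\ell\ge0}[(\ell!)^2D_\ell/4^\ell]\,x^\ell\bigr)^2$, with $D_\ell$ the Domb numbers. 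Integrality of $\alpha_{\ell+1}=(\ell!)^2D_\ell/4^\ell$ is then proved, not by recurrence propagation, but via Rogers' ${}_3F_2$ identity for the Domb generating function, which makes $\alpha_{\ell+1}-\ell^2\alpha_\ell$ manifestly an integer. So the missing idea in your plan is this descent: the Hilbert transform is used to trade higher $a+b$ for explicit arithmetic convolutions, collapsing $A(n+1)$ to an object whose integrality is a statement about Domb numbers rather than about a ninth-order recurrence.
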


 In this  note, we harness the  Hilbert transform to verify both Conjectures \ref{conj:BBBG} and \ref{conj:B_int_seq},   in a unified and coherent framework, without explicitly evaluating  individual Bessel moments contained in the sum rules.

  The article is organized as follows. In \S\ref{sec:HT_Bessel}, we start from a brief overview of the classical  Hilbert transform (\S\ref{subsec:HT_basics}), a major analytic and algebraic tool in this work; we then compute Hilbert transforms    for some simple algebraic expressions involving modified Bessel functions (\S\ref{subsec:HT_I_K}), paving the way for the subsequent proofs of Conjecture \ref{conj:BBBG} in \S\ref{subsec:BBBG}, and Conjecture \ref{conj:B_int_seq} in \S\ref{subsec:B_int_seq}. In addition to  proving both these conjectures, we construct a conjugate to the B$^3$G sum rule in \S\ref{subsec:BBBG}:\begin{align}
Y_{2n,n-2k}:=\sum_{m=1}^{\lfloor n/2\rfloor+1}(-1)^{m}{n\choose{2m-1}}\int_0^\infty [\pi I_0(t)]^{n-2m+1}[K_0(t)]^{n+2m-1}t^{n-2k-1}\D t=0,
\end{align}for $ n-1\geq2k\ge2$,  and obtain  generalizations of the Broadhurst--Mellit integer sequence in \S\ref{subsec:B_int_seq}:\begin{align}
\frac{4}{\pi^{2m+1}}\sum_{\ell=1}^{m+1}(-1)^{\ell-1} {2m\choose{2\ell-1}}\int_0^\infty [\pi I_0(t)]^{2(m-\ell)+1}[K_0(t)]^{2(m+\ell)-1}(2t)^{2(n+m)-3}\D t\in{}&\mathbb Z_{>0},\\\frac{4^{n}}{\pi^{2m}}\sum_{\ell=1}^{m}(-1)^{\ell-1} {2m-1\choose{2\ell-1}}\int_0^\infty [\pi I_0(t)]^{2(m-\ell)}[K_0(t)]^{2(m+\ell-1)}(2t)^{2(n+m-2)}\D  t\in{}&\mathbb Z_{>0,}
\end{align} for all $m,n\in\mathbb Z_{>0}$.

 \section{Hilbert transforms involving modified Bessel functions\label{sec:HT_Bessel}}\subsection{Basic properties of Hilbert transforms\label{subsec:HT_basics}}The key device in our proof is the Hilbert transform  $\HT $, which   operates on a suitably regular function $ f(x),\text{a.e.}\,x\in\mathbb R$ through a Cauchy principal value:\footnote{For our purposes, it suffices to evaluate the Hilbert transform of a function for almost every (a.e.)\ real variable, leaving out a subset of measure zero that will not affect subsequent computations of Lebesgue integrals. Later afterwards, an equal sign may also be used to denote an equality that is valid almost everywhere (even when ``a.e.'' is not written), depending on context. }\begin{align}(\HT f)(x)=\mathscr P\int_{-\infty}^\infty\frac{f(\xi)\D \xi}{\pi(x-\xi)},\quad \text{a.e.}\,x\in\mathbb R .\label{eq:HT_defn}\end{align}When restricted to an $ L^p$ space for $ 1<p<\infty$,   the Hilbert transform induces a bounded linear operator  $ \HT :L^p(\mathbb R)\longrightarrow L^p(\mathbb R)$ \cite[][p.~188]{SteinWeiss}.

We list here three fundamental properties of the Hilbert transform on $\mathbb R$:\begin{enumerate}[leftmargin=*,  label=(HT\arabic*),ref=(HT\arabic*),
widest=a, align=left]\item \label{itm:HT-1}\textbf{(Parseval-type Identity) }For  $ f\in L^p(\mathbb R),p>1$; $ g\in L^q(\mathbb R),q>1$ and $ \frac1p+\frac1q=1$, we have  \cite[][(10.81)]{KingVol1}:\begin{align}\langle f,\HT g\rangle+\langle g,\HT f\rangle=0,\quad\text{where }\langle f_1,f_2\rangle:=\int_{-\infty}^\infty f_1(x)f_2(x)\D x.\label{eq:Tricomi_Parseval1}\end{align}
\item \label{itm:HT-2}\textbf{(Hardy--Poincar\'e--Bertrand Formula)} For  $ f\in L^p(\mathbb R),p>1$; $ g\in L^q(\mathbb R),q>1$ and $ \frac1p+\frac1q=1$, we have \cite[][(2.227)]{KingVol1}:\begin{align}
\HT (f\HT g+g\HT f)=(\HT f)(\HT g)-fg.\label{eq:HPB}
\end{align} \item \label{itm:HT-3}\textbf{(Moment Formula)} For $n\in\mathbb Z_{>0}$, the following identity  \cite[][(4.113)]{KingVol1}\begin{align}
\mathscr P\int_{-\infty}^\infty\frac{\xi ^{n}f(\xi)\D \xi}{\pi(x-\xi)}=x^n\mathscr P\int_{-\infty}^\infty\frac{f(\xi)\D \xi}{\pi(x-\xi)}-\frac{1}{\pi}\sum_{k=0}^{n-1}  x^k\int_{-\infty}^\infty \xi^{n-1-k}f(\xi)\D\xi\label{eq:HT_pow}
\end{align}holds when each term therein is well-defined.\end{enumerate}Due to frequent invocations of properties \ref{itm:HT-1} and \ref{itm:HT-2}, we will abbreviate the relations in \eqref{eq:Tricomi_Parseval1} and \eqref{eq:HPB} as $ \ParH(f,g)$ and $ \HPB(f,g)$, respectively.

As a  consequence of the Hardy--Poincar\'e--Bertrand formula, the operator $ i\HT $ is its own inverse \cite[][(4.18)]{KingVol1}. In other words, \begin{align} \HT ^2=-\widehat I:L^p(\mathbb R)\longrightarrow L^p(\mathbb R),\quad 1<p<\infty\label{eq:HT_inv}\end{align}maps an $L^p$ function $f(x),x\in\mathbb R$ to $-f(x),x\in\mathbb R$.

\subsection{Applications to  modified Bessel functions\label{subsec:HT_I_K}}
To simplify notations, we introduce  short-hands for certain expressions involving the modified Bessel functions.\begin{definition}For a real variable $x\in\mathbb R$, we define the functions $ \iota,\iota_+,\iota_-,$ and $\kappa,\kappa_+,\kappa_-$ as follows:\begin{align}\iota(x):={}&\pi I_0(x);&\kappa(x):={}&K_0(|x|);\\\iota_+(x):={}& \iota(x)e^{- x}\mathbb1_{(0,\infty)}(x);&\kappa_+(x):={}&\kappa(x)e^{-x};\\\iota_-(x):={}&\iota(x)e^x\mathbb1_{(-\infty,0)}(x);&\kappa_-(x):={}&\kappa(x)e^{x},\end{align}where the indicator function behaves like $ \mathbb1_A(x)=1,x\in A;\mathbb1_A(x)=0,x\notin A$. It is clear that $ \kappa,\iota\kappa\in L^p(\mathbb R)$ for $ p>1$ and  $ \iota_+,\iota_-,\kappa_+,\kappa_-\in L^p(\mathbb R)$ for $p>2$. \end{definition}\begin{lemma}[Hilbert transform of modified Bessel functions]\label{lm:HT_I_K}We have the following Hilbert transform formulae:\begin{align}
\HT \iota_+={}&-\kappa_+,&
\HT \iota_-={}&\phantom{+\,\,}\kappa_- ,\label{eq:HT_I}\\\HT \kappa_+={}&\phantom{ +\,\,}\iota_+,&\HT \kappa_-={}&-\iota_-,
\label{eq:HT_K}\end{align}which entail the following Feynman rules:\begin{align}
\HT (\iota\kappa\sgn)={}&-\kappa^{2},\label{eq:HT_IKsgn}\\\HT (\kappa^2)={}& \iota\kappa\sgn,\label{eq:HT_KK}
\end{align}for $\sgn(x)=x/|x|,\forall x\in \mathbb R\smallsetminus\{0\}$.\end{lemma}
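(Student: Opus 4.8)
The plan is to reduce all four boundary formulae \eqref{eq:HT_I}--\eqref{eq:HT_K} to the single identity $\HT\iota_+=-\kappa_+$, and then to read off the Feynman rules \eqref{eq:HT_IKsgn}--\eqref{eq:HT_KK} from the Hardy--Poincar\'e--Bertrand formula \ref{itm:HT-2}. For the reduction, I first record that $\iota$ and $\kappa$ are even, so that $\iota_-(x)=\iota_+(-x)$ and $\kappa_-(x)=\kappa_+(-x)$; since the Hilbert transform anticommutes with the reflection $x\mapsto-x$, i.e.\ $(\HT[f(-\cdot)])(x)=-(\HT f)(-x)$ (a change of variable in \eqref{eq:HT_defn}), the formula $\HT\iota_+=-\kappa_+$ at once yields $\HT\iota_-=\kappa_-$. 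Next, as $\iota_+,\kappa_+\in L^p(\mathbb R)$ for $p>2$, the involution \eqref{eq:HT_inv} gives $\HT\kappa_+=-\HT(\HT\iota_+)=\iota_+$, and one further reflection produces $\HT\kappa_-=-\iota_-$. Thus the whole of \eqref{eq:HT_I}--\eqref{eq:HT_K} rests on a single transform.

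To prove $\HT\iota_+=-\kappa_+$, I would exhibit $\iota_+$ as the real part of the boundary value of a function holomorphic and decaying in the lower half-plane. Defined through the Hankel function $H_0^{(2)}$ (whose branch cut lies along the positive imaginary axis), the function
\begin{align}
G(z):=\tfrac{\pi}{2}H_0^{(2)}(iz)e^{-z}=\int_0^\pi e^{-z(1-\cos\theta)}\D\theta+i\int_0^\infty e^{-z(1+\cosh u)}\D u
\end{align}
is holomorphic throughout the open lower half-plane and continues across $\mathbb R\smallsetminus\{0\}$; where the branches coincide it equals $[\pi I_0(z)+iK_0(z)]e^{-z}$, using $I_0(z)=J_0(iz)$ and $K_0(z)=\tfrac{\pi i}{2}H_0^{(1)}(iz)$. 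Its boundary value on $\mathbb R$ is then computed by the connection formulae: on $(0,\infty)$ one reads off $G(x)=\pi I_0(x)e^{-x}+iK_0(x)e^{-x}=\iota_+(x)+i\kappa_+(x)$, while on $(-\infty,0)$ the conjugation relation $\overline{H_0^{(1)}(\bar w)}=H_0^{(2)}(w)$ gives $\tfrac{\pi}{2}H_0^{(2)}(ix)=iK_0(|x|)$, whence $G(x)=i\kappa_+(x)=\iota_+(x)+i\kappa_+(x)$ there as well (recall $\iota_+$ vanishes on the negative axis). Closing the Cauchy integral of $G$ in the lower half-plane, where the semicircular arc drops out by the decay discussed below, and invoking Sokhotski--Plemelj yields, for any function holomorphic and decaying there, $\HT(\R G)=-\I G$ on the boundary; with $\R G=\iota_+$ and $\I G=\kappa_+$ this is exactly $\HT\iota_+=-\kappa_+$.

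The crux, and the step I expect to be the main obstacle, is the uniform decay estimate $G(z)=O(|z|^{-1/2})$ as $|z|\to\infty$ in the closed lower half-plane, which is what legitimizes the vanishing arc and the Hardy-class membership. This is delicate precisely because $I_0$ and $K_0$ separately behave like $e^{\pm z}$, and it is only the combination $\pi I_0+iK_0=\tfrac{\pi}{2}H_0^{(2)}(iz)$ that cancels the dominant exponential against the weight $e^{-z}$, leaving the recessive $|z|^{-1/2}$ tail. Making this rigorous means tracking the Stokes sectors in which the asymptotics of $H_0^{(2)}$ are valid (or, equivalently, rotating the contour in the integral representation above to restore convergence off the right half-plane), while the behaviour as $|z|\to\infty$ near the real axis is supplied by the explicit boundary values already found.

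Finally, the Feynman rules follow algebraically. Applying $\HPB(\kappa_+,\kappa_-)$ with $\HT\kappa_+=\iota_+$ and $\HT\kappa_-=-\iota_-$, the left argument is $\kappa_+\HT\kappa_-+\kappa_-\HT\kappa_+=\kappa_-\iota_+-\kappa_+\iota_-$, which, since $\iota_+$ and $\iota_-$ have disjoint supports, equals $\pi I_0(|x|)K_0(|x|)\sgn(x)=\iota\kappa\sgn$; the right side is $(\HT\kappa_+)(\HT\kappa_-)-\kappa_+\kappa_-=-\iota_+\iota_--\kappa^2=-\kappa^2$, again because $\iota_+\iota_-\equiv0$. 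This is \eqref{eq:HT_IKsgn}, and \eqref{eq:HT_KK} follows from one more use of \eqref{eq:HT_inv}, valid as $\kappa^2,\iota\kappa\sgn\in L^p(\mathbb R)$ for $p>1$. The only subtlety is the integrability hypothesis of \ref{itm:HT-2}: since $\kappa_\pm\in L^p$ only for $p>2$, the conjugate-exponent condition cannot be met verbatim, so I would either appeal to a form of the Poincar\'e--Bertrand identity adapted to these tails, or, more cleanly, reprove \eqref{eq:HT_KK} directly by the half-plane method applied to $\Theta(z):=K_0(z)K_0(ze^{-i\pi})$, whose upper boundary value is $\kappa^2+i\,\iota\kappa\sgn$ (via $K_0(ze^{-i\pi})=K_0(z)+i\pi I_0(z)$) and which decays like $|z|^{-1}$.
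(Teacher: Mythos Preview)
Your argument is correct, and in fact more self-contained than the paper's. The paper does not prove \eqref{eq:HT_I} at all: it simply cites the formulae $\HT\iota_\pm=\mp\kappa_\pm$ as classical (from King's tables), and then observes, exactly as you do, that \eqref{eq:HT_K} follows from \eqref{eq:HT_I} by the involution $\HT^2=-\widehat I$. Your half-plane derivation via $G(z)=\tfrac{\pi}{2}H_0^{(2)}(iz)e^{-z}$ supplies what the paper omits; the decay $G(z)=O(|z|^{-1/2})$ that you flag as the crux is indeed the standard Hankel asymptotic $H_0^{(2)}(w)\sim\sqrt{2/(\pi w)}\,e^{-iw}$, whose sector of validity covers the closed lower half-plane after the substitution $w=iz$, so this step goes through without difficulty.

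For the Feynman rules the two approaches diverge in an instructive way. You apply $\HPB(\kappa_+,\kappa_-)$ and then, correctly noting that $\kappa_\pm\in L^p$ only for $p>2$ so the conjugate-exponent hypothesis of \ref{itm:HT-2} fails verbatim, fall back either on an adapted Poincar\'e--Bertrand identity or on a second half-plane argument with $\Theta(z)=K_0(z)K_0(ze^{-i\pi})$. The paper instead applies $\HPB(\iota_+,\iota_-\mathbb1_{[-L,L]})$: the truncated factor is bounded with compact support, hence lies in every $L^q$, so the hypothesis is satisfied literally; one then passes to the $L\to\infty$ limit using the $L^p$-continuity of $\HT$. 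The resulting identity $\HT(\iota_+\kappa_--\iota_-\kappa_+)=-\kappa_+\kappa_--\iota_+\iota_-$ is algebraically the same as yours. The truncation device is slicker than either of your proposed repairs, and worth remembering as a general way to enforce the $L^p$--$L^q$ pairing when both natural factors sit above $L^2$.
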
\begin{proof}The results in \eqref{eq:HT_I} and \eqref{eq:HT_K} are classical (see \cite[][\S9.10]{KingVol1} or \cite[][Appendix 1, Tables 1.8H and 1.8I]{KingVol2}).
Furthermore,
we note that  \eqref{eq:HT_K}
follows from  \eqref{eq:HT_I} and \eqref{eq:HT_inv}.

Specializing the Hardy--Poincar\'e--Bertrand formula to $ \HPB(\iota_+,\iota_{-}\mathbb1_{[-L,L]})$ where the values of $ \iota_{-}\mathbb1_{[-L,L]}$ are zero-padded outside a  bounded interval $ [-L,L]$, and exploiting the continuity of the Hilbert transform as a linear operator,  we obtain the following formula in the $ L\to\infty$ limit: \begin{align}
\HT (\iota_+\kappa_{-}-\iota_{-}\kappa_{+})=-\kappa_+\kappa_{-}-\iota_+\iota_-.
\end{align} Here, the left-hand side of the equation above is equal to $ \HT (\iota\kappa\sgn)$, while the right-hand side simplifies to $-\kappa^2 $. This proves  \eqref{eq:HT_IKsgn}. Applying $\HT $ to both sides of   \eqref{eq:HT_IKsgn}, we arrive at \eqref{eq:HT_KK}.  \end{proof}\begin{remark}With a Parseval-type identity $ \ParH(\iota\kappa\sgn,\kappa^2)$, we immediately deduce from  \eqref{eq:HT_IKsgn} and \eqref{eq:HT_KK} a sum rule for Bessel moments \cite[][(91) and (219)]{Bailey2008}\begin{align}
0={}&\frac{\langle  \iota\kappa\sgn,\HT (\kappa^2)\rangle+\langle\kappa^2,\HT (  \iota\kappa\sgn)\rangle}{2}\notag\\={}&\int_0^\infty[\pi I_0(t)]^2[K_0(t)]^2\D t-\int_0^\infty  [K_0(t)]^4\D t=:Z_{4,0}.\label{eq:Z_4_0}
\end{align}Unlike the analysis in  \cite[][(91)]{Bailey2008}, during our proof of the cancelation formula above, we have not explicitly computed either   $\int_0^\infty[\pi I_0(t)]^2[K_0(t)]^2\D t$ or $ \int_0^\infty  [K_0(t)]^4\D t$.\eor\end{remark}

\section{Applications to  sum rules of Bessel moments}
\subsection{B$^3$G sum rule\label{subsec:BBBG} }
To the best of our knowledge,  except for the case of $ Z_{4,0}$ (which was solved by brute force \cite[][(91)]{Bailey2008} and reaffirmed in \S\ref{subsec:HT_I_K} above), Conjecture \ref{conj:BBBG} has remained hitherto open. Both \cite[][(221)]{Bailey2008} and \cite[][(112)]{Broadhurst2016} have singled out the verification of $ Z_{6,1}=0$ as an unsolved problem.\footnote{Confusingly, in footnote 13 of \cite{Bailey2008}, the authors apparently declared that the conjecture on $ Z_{2n,n-2k}=0$ ``has also now been resolved'', without supplying further citations to publications or preprints. This contradicts the recent claims in  \cite[][(112)]{Broadhurst2016} and \cite[][(5.6)]{BroadhurstMellit2016} on the open status of $ Z_{6,1}=0$.}

Before handling  Conjecture \ref{conj:BBBG} in full, we first walk through the proof of  $ Z_{6,1}=0$ and $ Z_{8,0}=0$ in the next  lemma, to illustrate our strategies. In what follows, for $n\in\mathbb Z_{>0}$, we set $\varpi_n(x):= x^n,\forall x\in\mathbb R$; we further define $\varpi_0(x):=1,\forall x\in\mathbb R$.
\begin{lemma}We have $Z_{6,1}=0$ and  $ Z_{8,0}=0$.\end{lemma}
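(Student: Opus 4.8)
The plan is to express each B$^3$G moment as an integral over all of $\mathbb R$ of the real part of a power of the single complex function $w:=\iota\kappa\sgn+i\kappa^2$, and then to annihilate that integral with the Feynman rules of Lemma~\ref{lm:HT_I_K} and the Parseval identity \ref{itm:HT-1}. The rules \eqref{eq:HT_IKsgn}--\eqref{eq:HT_KK} say exactly that $\HT w=-\kappa^2+i\,\iota\kappa\sgn=iw$, so $w$ is an $\HT$-eigenfunction with eigenvalue $i$. The key point is that this survives every power: substituting $f=w^{n-1}$, $g=w$ into the Hardy--Poincar\'e--Bertrand formula \ref{itm:HT-2} and assuming $\HT(w^{n-1})=iw^{n-1}$ inductively gives $2i\,\HT(w^n)=(iw^{n-1})(iw)-w^n=-2w^n$, i.e.\ $\HT(w^n)=iw^n$. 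Writing $w^n=P_n+iQ_n$ with $P_n:=\R[w^n]$ and $Q_n:=\I[w^n]$, this is the pair $\HT P_n=-Q_n$, $\HT Q_n=P_n$. Since $w(-x)=-\overline{w(x)}$, the parity of $P_n$ and $Q_n$ is governed by that of $n$; and on $(0,\infty)$ one has $w=\kappa(\iota+i\kappa)$, so $P_n=\kappa^n\R[(\iota+i\kappa)^n]$ reproduces the B$^3$G summand.

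For $Z_{8,0}$ (the case $n=4$, $c=0$) only the degree-four level is needed. One application of $\HPB(\iota\kappa\sgn,\kappa^2)$ produces the pair $A:=P_2=\iota^2\kappa^2-\kappa^4$ and $B:=Q_2=2\iota\kappa^3\sgn$ satisfying $\HT A=-B$, the companion relation $\HT B=A$ following from \eqref{eq:HT_inv}. The Parseval identity $\ParH(A,B)$ then reads $\langle A,A\rangle-\langle B,B\rangle=0$, i.e.\ $\int_{-\infty}^\infty(A^2-B^2)\,\D x=0$; and a direct expansion gives $A^2-B^2=\iota^4\kappa^4-6\iota^2\kappa^6+\kappa^8$, whose integral over $\mathbb R$ equals $2Z_{8,0}$. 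Hence $Z_{8,0}=0$.

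For $Z_{6,1}$ (the case $n=3$, $c=1$) the extra weight $t$ brings in the moment formula \ref{itm:HT-3}. Because $P_3=(\iota^3\kappa^3-3\iota\kappa^5)\sgn$ is odd, $\varpi_1P_3$ is even and $Z_{6,1}=\tfrac12\int_{-\infty}^\infty\varpi_1P_3\,\D x$, so it suffices to kill this integral. I would apply $\ParH(\varpi_1P_2,Q_1)$ with $Q_1=\kappa^2$: by \ref{itm:HT-3} one has $\HT(\varpi_1P_2)=\varpi_1\HT P_2-\tfrac1\pi\int_{-\infty}^\infty P_2\,\D x=-\varpi_1Q_2-\tfrac1\pi\int_{-\infty}^\infty P_2\,\D x$, while $\HT Q_1=P_1$. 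Feeding these into \ref{itm:HT-1} and using $P_1P_2-Q_1Q_2=\R[w^3]=P_3$, the identity collapses to
\begin{align}
\int_{-\infty}^\infty\varpi_1P_3\,\D x=\frac1\pi\Big(\int_{-\infty}^\infty P_2\,\D x\Big)\Big(\int_{-\infty}^\infty Q_1\,\D x\Big).\notag
\end{align}
By \eqref{eq:Z_4_0} in the remark after Lemma~\ref{lm:HT_I_K} we have $\int_{-\infty}^\infty P_2\,\D x=2Z_{4,0}=0$, so the right-hand side vanishes and $Z_{6,1}=0$.

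The binomial and real-part manipulations are routine; the delicate part is the analytic justification, since $\iota=\pi I_0$ grows like $e^{x}$ and only the accompanying $\kappa$-factors render the products integrable. The borderline object $\iota\kappa\sgn\sim1/(2x)$ lies in $L^p$ only for $p>1$, so each use of \ref{itm:HT-1} and \ref{itm:HT-2} must pair conjugate exponents with care, and the inductive step $\HT(w^n)=iw^n$ will likely need the same truncation device ($\cdot\,\mathbb1_{[-L,L]}$ followed by $L\to\infty$) employed in the proof of Lemma~\ref{lm:HT_I_K}. I expect the genuine obstacle to be the moment-formula constant: it must be an \emph{absolutely} convergent integral, which is exactly why I pair $\varpi_1P_2$ with $Q_1$---making the delicate integral $\int_{-\infty}^\infty P_2\,\D x$, which converges absolutely and already vanishes---rather than a pairing that would resurrect the merely conditionally convergent $\int_{-\infty}^\infty\iota\kappa\sgn\,\D x$.
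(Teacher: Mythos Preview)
Your proof is correct and, once unwrapped, coincides with the paper's: your pairing $\ParH(A,B)=\ParH(P_2,Q_2)$ for $Z_{8,0}$ and your pairing $\ParH(\varpi_1P_2,Q_1)$ for $Z_{6,1}$ (with the moment formula eating the constant via $Z_{4,0}=0$) are exactly the pairings the paper uses. The one packaging difference is that you bundle the two Feynman rules into the single complex eigenrelation $\HT w=iw$ for $w=\iota\kappa\sgn+i\kappa^2$ and then propagate it to $\HT(w^n)=iw^n$; this is precisely the paper's subsequent ``Hilbert ladder'' machinery (Proposition~\ref{prop:Hilbert_ladders}) in complex form, with $P_n=\zeta_n$ and $Q_n=-\eta_n$, so your preamble anticipates the general argument rather than diverging from it.
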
\begin{proof}
By $\HPB(\iota\kappa\sgn,\kappa^{2})$, we have a Hilbert transform formula:\begin{align}
\HT(\iota^2\kappa^2-\kappa^{4})={}&-2\iota \kappa^3\sgn.\label{eq:HT_IIKK_KKKK}
\end{align}Appealing to \eqref{eq:HT_pow}, we   deduce \begin{align}
\HT((\iota^2\kappa^2-\kappa^{4})\varpi_{1})={}&-2\iota \kappa^3\varpi_{1}\sgn-\frac{1}{\pi}\int_{-\infty}^\infty (\iota^2\kappa^2-\kappa^{4})(\xi)\D\xi=-2\iota \kappa^3\varpi_{1}\sgn,\label{eq:HT_IIKK_KKKK_p1}
\end{align}upon invoking $Z_{4,0}=0$ in the last step. By $\ParH(\kappa^2,(\iota^2\kappa^2-\kappa^{4})\varpi_{1})$, we arrive at \begin{align}
0={}&\frac{\langle\kappa^2,-2\iota \kappa^3\varpi_{1}\sgn\rangle+\langle \iota\kappa\sgn,(\iota^2\kappa^2-\kappa^{4})\varpi_{1}\rangle}{2}\notag\\={}&\int_0^\infty[\pi I_0(t)]^{3}[K_0(t)]^3t\D t-3\int_0^\infty\pi I_0(t)[K_0(t)]^5t\D t=Z_{6,1},\label{eq:Z61_pf}
\end{align}as claimed.

By \eqref{eq:HT_IIKK_KKKK} and $\ParH(\iota^2\kappa^2-\kappa^{4},2\iota \kappa^3\sgn)$, we have \begin{align}
0= {}&\frac{ \langle\iota^2\kappa^2-\kappa^{4},\HT(2\iota \kappa^3\sgn)\rangle+\langle2\iota \kappa^3\sgn,\HT(\iota^2\kappa^2-\kappa^{4})\rangle}{2}\notag\\={}&\int_0^\infty[\pi I_0(t)]^{4}[K_0(t)]^4\D t-6\int_0^\infty[\pi I_0(t)]^{2}[K_0(t)]^6\D t+\int_0^\infty[K_0(t)]^8\D t=Z_{8,0},\label{eq:Z80_pf}
\end{align}as stated.\end{proof}

In the arguments above, we have established the sum rules $Z_{6,1}=0$ and $Z_{8,0}=0$ on a simpler vanishing identity $Z_{4,0}=0$.
In the next  proposition, we proceed to more general transitions from $2\ell$ Bessel factors to $2\ell+2$ and $2\ell+4$ scenarios, where $\ell\in\mathbb Z_{>0}$.
\begin{proposition}[Hilbert ladders]\label{prop:Hilbert_ladders}We have the following Feynman rules for all  $\ell\in\mathbb Z_{>0}$: \begin{align}
\begin{cases}
\HT(\zeta_\ell)=\phantom{+}\eta_\ell, \\\HT(\eta_\ell)=-\zeta_\ell,\label{eq:H_zeta_ell_eta_ell}
\end{cases}
\end{align}where the Hilbert ladders are defined by\begin{align}\quad \text{} \quad\begin{cases}2\zeta_{\ell}=\phantom{i}\kappa^{\ell}[(\iota\sgn+i\kappa)^\ell+(\iota\sgn-i\kappa)^\ell], \\
2\eta_{\ell}=i\kappa^{\ell}[(\iota\sgn+i\kappa)^\ell-(\iota\sgn-i\kappa)^\ell]. \\
\end{cases}
\quad \label{eq:Hilb_ladders_zeta_eta}
\end{align} Moreover, we have  \begin{align}\HT(\zeta_\ell \varpi_{ j})=\eta_\ell \varpi_j, \quad\forall j\in\mathbb Z\cap[0,\ell).\label{eq:zeta_ell_pow_comm}\end{align} \end{proposition}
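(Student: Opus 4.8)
The plan is to fuse the two real ladders into one complex eigenvalue equation for $\HT$. Reading off the $\pm$ combinations in \eqref{eq:Hilb_ladders_zeta_eta} gives
\[
\zeta_\ell+i\eta_\ell=\kappa^\ell(\iota\sgn-i\kappa)^\ell=w^\ell,\qquad\zeta_\ell-i\eta_\ell=\overline{w}^{\,\ell},\qquad w:=\iota\kappa\sgn-i\kappa^2.
\]
Because $\HT$ has a real convolution kernel, $\HT\overline h=\overline{\HT h}$, so the pair \eqref{eq:H_zeta_ell_eta_ell} is equivalent to the single claim that $w^\ell$ is an eigenfunction, $\HT(w^\ell)=-iw^\ell$, for every $\ell\in\mathbb Z_{>0}$: the conjugate relation $\HT(\overline w^{\,\ell})=i\overline w^{\,\ell}$ comes for free, and separating real and imaginary parts returns $\HT\zeta_\ell=\eta_\ell$ and $\HT\eta_\ell=-\zeta_\ell$.

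I would establish $\HT(w^\ell)=-iw^\ell$ by induction on $\ell$. The base case $\ell=1$ is precisely $\HT w=-\kappa^2-i\,\iota\kappa\sgn=-iw$, i.e.\ \eqref{eq:HT_IKsgn}--\eqref{eq:HT_KK} of Lemma~\ref{lm:HT_I_K}. For the step, assume $\HT(w^\ell)=-iw^\ell$ and apply the Hardy--Poincar\'e--Bertrand formula \eqref{eq:HPB} with the two factors $w^\ell$ and $w$. The induction hypothesis collapses both sides: the argument $w^\ell\HT w+w\HT(w^\ell)$ reduces to $-2iw^{\ell+1}$, while $(\HT w^\ell)(\HT w)-w^{\ell+1}$ reduces to $-2w^{\ell+1}$, so $-2i\,\HT(w^{\ell+1})=-2w^{\ell+1}$ and hence $\HT(w^{\ell+1})=-iw^{\ell+1}$. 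As in the proof of Lemma~\ref{lm:HT_I_K}, one justifies \eqref{eq:HPB} through a compactly supported truncation followed by an $L^p$ limit; the needed integrability is available since $w,w^\ell\in L^p(\mathbb R)$ for $1<p<\infty$ and $w^{\ell+1}$ decays like $|t|^{-(\ell+1)}$ at infinity.

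For the commutation identity \eqref{eq:zeta_ell_pow_comm} the crux is that the low moments of $\zeta_\ell$ vanish, and I would obtain these from the moment formula \eqref{eq:HT_pow}. I claim $\int_{-\infty}^\infty\xi^m w^\ell(\xi)\,\D\xi=0$ for $0\le m\le\ell-2$. Since $w^\ell$ decays like $|t|^{-\ell}$ at infinity (the $\kappa^2$ part being exponentially small), the moments $\int\xi^j w^\ell\,\D\xi$ converge for $j\le\ell-2$ and $\varpi_{\ell-1}w^\ell\in L^p(\mathbb R)$ for $p>1$, so \eqref{eq:HT_pow} applies with $f=w^\ell$ and $n=\ell-1$. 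Substituting $\HT(w^\ell)=-iw^\ell$ and rearranging, the polynomial correction $\tfrac1\pi\sum_{k=0}^{\ell-2}\varpi_k\int\xi^{\ell-2-k}w^\ell\,\D\xi$ is seen to equal $-i\varpi_{\ell-1}w^\ell-\HT(\varpi_{\ell-1}w^\ell)$, which lies in $L^p(\mathbb R)$; but a polynomial belonging to $L^p(\mathbb R)$ must vanish identically, forcing every coefficient, hence every moment $\int\xi^j w^\ell\,\D\xi$ with $j\le\ell-2$, to be zero. Taking real parts yields $\langle\varpi_m,\zeta_\ell\rangle=0$ for $0\le m\le\ell-2$ (for $\ell=2$ this recovers $Z_{4,0}=0$). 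Finally, applying \eqref{eq:HT_pow} once more with $f=\zeta_\ell$ and $n=j\in\mathbb Z\cap[0,\ell)$, every correction moment $\langle\varpi_{j-1-k},\zeta_\ell\rangle$ has index $\le\ell-2$ and therefore vanishes, leaving $\HT(\zeta_\ell\varpi_j)=\varpi_j\HT\zeta_\ell=\eta_\ell\varpi_j$, which is \eqref{eq:zeta_ell_pow_comm}.

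The step I expect to be most delicate is this moment-vanishing argument: the whole conclusion hinges on verifying that \eqref{eq:HT_pow} genuinely applies at $n=\ell-1$ and that the correction term is an honest $L^p$ function, after which the ``a polynomial in $L^p(\mathbb R)$ is zero'' principle does the work. By comparison, the eigenvalue induction via \eqref{eq:HPB} is essentially forced once the repackaging $\zeta_\ell+i\eta_\ell=w^\ell$ is recognized.
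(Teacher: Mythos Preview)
Your argument is correct. For the Feynman rules \eqref{eq:H_zeta_ell_eta_ell}, your complex repackaging $\zeta_\ell+i\eta_\ell=w^\ell$ and the eigenvalue induction $\HT(w^\ell)=-iw^\ell$ via $\HPB(w^\ell,w)$ is the paper's proof in compressed form: the paper's addition laws $\zeta_\ell\eta_m+\eta_\ell\zeta_m=\eta_{\ell+m}$ and $\zeta_\ell\zeta_m-\eta_\ell\eta_m=\zeta_{\ell+m}$ are precisely the imaginary and real parts of $w^\ell w^m=w^{\ell+m}$, and the paper's step $\HPB(\zeta_\ell,\zeta_1)$ is the real component of your $\HPB(w^\ell,w)$. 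So on this half the two proofs coincide, yours being the tidier transcription.

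For the commutation identity \eqref{eq:zeta_ell_pow_comm}, however, you take a genuinely different route. The paper runs a second induction, stepping $\ell\mapsto\ell+2$: assuming $\HT(\zeta_\ell\varpi_j)=\eta_\ell\varpi_j$ for $j<\ell$, it invokes $\ParH(\zeta_\ell\varpi_j,\eta_2\varpi_1)$ together with the factorization $\zeta_{\ell+2}\varpi_{j+1}=-(\zeta_\ell\varpi_j)\HT(\eta_2\varpi_1)-(\eta_2\varpi_1)\HT(\zeta_\ell\varpi_j)$ to conclude $\int_{\mathbb R}\zeta_{\ell+2}\varpi_{j+1}=0$, and then feeds those vanishing moments back through \eqref{eq:HT_pow}; separate base cases $\ell=1,2$ are needed. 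Your approach bypasses this bootstrap entirely: a single application of \eqref{eq:HT_pow} at the top power $n=\ell-1$ identifies the polynomial remainder with $-i\varpi_{\ell-1}w^\ell-\HT(\varpi_{\ell-1}w^\ell)\in L^p(\mathbb R)$, and the ``polynomial in $L^p$ is zero'' principle annihilates all low moments of $w^\ell$ (hence of $\zeta_\ell$) at once. This is shorter and conceptually cleaner; the only price is the borderline check $\varpi_{\ell-1}w^\ell\in L^p$ for $p>1$, which holds because $|w(t)|\sim\pi/(2|t|)$ at infinity, whereas the paper's Parseval pairings never approach that edge.
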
\begin{proof}By direct computation, we can verify the following algebraic relations for Hilbert ladders: \begin{align}
\zeta_\ell\eta_m+\eta_\ell\zeta_m={}&\eta_{\ell +m},\label{eq:eta_ell_add}\\\zeta_\ell\zeta_m-\eta_{\ell}\eta_{m}={}&\zeta_{\ell +m}.\label{eq:zeta_ell_add}
\end{align}

For $ \ell=1$, the Feynman rules in \eqref{eq:H_zeta_ell_eta_ell} reproduce \eqref{eq:HT_IKsgn}--\eqref{eq:HT_KK} in Lemma \ref{lm:HT_I_K}. Assuming that the Hilbert transform formula $ \HT(\zeta_\ell)=\eta_\ell$ holds up to a certain integer  $\ell$, then $\HPB(\zeta_\ell,\zeta_1)$ and \eqref{eq:eta_ell_add}--\eqref{eq:zeta_ell_add} bring us $ \HT\eta_{\ell+1}=-\zeta_{\ell+1}$, which is equivalent to  $ \HT(\zeta_{\ell+1})=\eta_{\ell+1}$. By induction, this proves  \eqref{eq:H_zeta_ell_eta_ell} in its entirety.

For $\ell=1$, the statement in \eqref{eq:zeta_ell_pow_comm} is just $ \HT(\zeta_1)=\eta_1$. For $\ell=2$, the relation  $ \HT(\zeta_2\varpi_1)=\eta_2\varpi_1$ is effectively proved in \eqref{eq:HT_IIKK_KKKK_p1}. Now suppose that up to a certain integer  $\ell$,  we have  $ \HT(\zeta_\ell \varpi_{ j})=\eta_\ell \varpi_j, \forall j\in\mathbb Z\cap[0,\ell) $. From  $ \ParH(\zeta_\ell \varpi_j,\eta_2 \varpi_1)$ and $ \zeta_{\ell+2} \varpi_{j+1}=(\zeta_\ell \varpi_j)(\zeta_2\varpi_1)-(\eta_{\ell}\varpi_j)(\eta_{2}\varpi_1)=-(\zeta_\ell \varpi_j)\HT(\eta_2\varpi_1)-(\eta_{2}\varpi_1)\HT(\zeta_\ell \varpi_{ j})$, we know that $ \int_{\mathbb R}\zeta_{\ell+2} (x)x^{j+1}\D x=0, \forall j\in\mathbb Z\cap[0,\ell)$. According to the moment formula in \eqref{eq:HT_pow}, it follows that   $ \HT(\zeta_{\ell+2} \varpi_{ j+2})=\eta_{\ell+2} \varpi_{j+2}, \forall j\in\mathbb Z\cap[0,\ell) $. Additionally,   $ \ParH(\zeta_\ell ,\eta_2 )$ and $\zeta_{\ell+2} =\zeta_\ell \zeta_2-\eta_{\ell}\eta_{2}=-\zeta_\ell \HT\eta_2-\eta_{2}\HT\zeta_\ell$ tell us that  $ \int_{\mathbb R}\zeta_{\ell+2} (x)\D x=0 $. Thus $ \HT (\zeta_{\ell+2} \varpi_{ 1})=\eta_{\ell+2} \varpi_{1}$ is also true. This proves that    $ \HT(\zeta_{\ell+2} \varpi_{ j})=\eta_{\ell+2} \varpi_{j}, \forall j\in\mathbb Z\cap[0,\ell+2) $, so  \eqref{eq:zeta_ell_pow_comm} follows by induction.          \end{proof}

\begin{theorem}[B$^3$G sum rule]\label{thm:B3G} We have\begin{align}
Z_{2n,n-2k}:={}&\sum_{m=0}^{\lfloor n/2\rfloor}(-1)^m{n\choose{2m}}\int_0^\infty [\pi I_0(t)]^{n-2m}[K_0(t)]^{n+2m}t^{n-2k}\D t=0\label{eq:Z_vanish}
\end{align}for all integer pairs $(n,k)$ meeting the requirements  $n\geq2k\geq2$. \end{theorem}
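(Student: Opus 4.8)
The plan is to recognize the B$^3$G summand as the restriction to the positive half-line of the Hilbert ladder $\zeta_n$ introduced in Proposition \ref{prop:Hilbert_ladders}, and then to read off the vanishing of $Z_{2n,n-2k}$ from the moment-annihilating property already encoded in \eqref{eq:zeta_ell_pow_comm}. First I would expand the defining formula \eqref{eq:Hilb_ladders_zeta_eta} for $\zeta_n$ by the binomial theorem. Since $i^{j}+(-i)^{j}$ vanishes for odd $j$ and equals $2(-1)^m$ when $j=2m$, the two conjugate powers combine, and after using $\sgn^{n-2m}=\sgn^{n}$ off the origin one obtains
\[
\zeta_n=\sgn^n\sum_{m=0}^{\lfloor n/2\rfloor}(-1)^m\binom{n}{2m}\iota^{\,n-2m}\kappa^{\,n+2m}.
\]
Restricting to $x=t>0$, where $\sgn=1$, $\iota=\pi I_0$ and $\kappa=K_0$, shows that $\zeta_n(t)$ is precisely the integrand of $Z_{2n,n-2k}$ before the weight $t^{n-2k}$; hence $Z_{2n,n-2k}=\int_0^\infty\zeta_n(t)\,t^{n-2k}\,\D t$.

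Next I would pass from the half-line to the whole real line by a parity count. Because $\iota$ and $\kappa$ are even while $\sgn$ is odd, the displayed formula gives $\zeta_n(-x)=(-1)^n\zeta_n(x)$; since $n-2k$ has the same parity as $n$, the product $\zeta_n\varpi_{n-2k}$ is even, so that $Z_{2n,n-2k}=\tfrac12\int_{\mathbb R}\zeta_n(x)\,x^{n-2k}\,\D x$. The integrability needed here is exactly the absolute convergence of the Bessel moments, controlled by the asymptotics $I_0^{\,a}K_0^{\,b}\sim\mathrm{const}\cdot t^{-(a+b)/2}e^{(a-b)t}$ as $t\to\infty$ (here $a-b=-4m\le0$) together with $k\ge1$.

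It then remains to show that this full-line moment vanishes. Here I would invoke \eqref{eq:zeta_ell_pow_comm} with $\ell=n$: comparing the clean commutation relation $\HT(\zeta_n\varpi_j)=\eta_n\varpi_j$ against the general moment formula \eqref{eq:HT_pow} applied to $f=\zeta_n$ (using $\HT\zeta_n=\eta_n$), the correction terms $-\tfrac1\pi\sum_{i=0}^{j-1}\varpi_i\int_{\mathbb R}\xi^{j-1-i}\zeta_n(\xi)\,\D\xi$ must cancel identically in $x$. By linear independence of the powers $\varpi_i$, this forces $\int_{\mathbb R}\xi^{p}\zeta_n(\xi)\,\D\xi=0$ for every $p\in\mathbb Z\cap[0,n-2]$. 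The hypotheses $n\ge2k\ge2$ place the exponent $n-2k$ exactly inside this window: $k\ge1$ gives $n-2k\le n-2$, while $n\ge2k$ gives $n-2k\ge0$. Consequently the required moment vanishes and $Z_{2n,n-2k}=0$.

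The only genuinely substantive step is the last one, the extraction of moment-annihilation from \eqref{eq:zeta_ell_pow_comm}; but this is already the content of Proposition \ref{prop:Hilbert_ladders}, whose inductive proof builds in precisely these vanishing moments through repeated applications of $\ParH$ and the moment formula. The binomial expansion and the parity reduction are routine bookkeeping. The point requiring care is the matching of index ranges, namely the observation that the B$^3$G constraint $n\ge2k\ge2$ is exactly what is needed to keep the weight exponent $n-2k$ within the interval $[0,n-2]$ on which the commutation relation \eqref{eq:zeta_ell_pow_comm} guarantees vanishing moments.
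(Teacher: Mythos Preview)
Your proposal is correct and follows essentially the same route as the paper: expand $\zeta_n$ binomially into $\sum_{m}(-1)^m\binom{n}{2m}(\iota\sgn)^{n-2m}\kappa^{n+2m}$, then invoke \eqref{eq:zeta_ell_pow_comm} to conclude $\int_{\mathbb R}\zeta_n(x)x^{j}\,\D x=0$ for $j\in\mathbb Z\cap[0,n-1)$, which covers $j=n-2k$ under $n\ge2k\ge2$. The parity reduction and the explicit extraction of moment vanishing from the commutation relation via \eqref{eq:HT_pow} are details the paper leaves implicit, but the logic is identical.
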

 \begin{proof} Simply notice that binomial expansion leads us to \begin{align}
\zeta_n=\sum_{m=0}^{\lfloor n/2\rfloor}(-1)^m{n\choose{2m}}\left(\iota\sgn\right)^{n-2m}\kappa^{n+2m},
\end{align}      and  \eqref{eq:zeta_ell_pow_comm} implies that $ \int_{\mathbb R}  \zeta_n(x)x^j\D x=0$ for  $ j\in\mathbb Z\cap[0,n-1)$.  \end{proof}\begin{remark}Since Hilbert inversion brings us   $ \HT(\eta_\ell \varpi_{ j})=-\zeta_\ell \varpi_j=\varpi_j\HT\eta_\ell$ for $ j\in\mathbb Z\cap[0,\ell)$, we must have $ \int_{\mathbb R}  \eta_\ell(x)x^j\D x=0$ for  $ j\in\mathbb Z\cap[0,\ell-1)$, as well. Excluding from these vanishing identities the trivial cases where the integrands are  odd functions over $\mathbb R$, we arrive at another family of cancelation formulae, conjugate to the B$^3$G sum rule:\begin{align}
Y_{2n,n-2k}:=\sum_{m=1}^{\lfloor n/2\rfloor+1}(-1)^{m}{n\choose{2m-1}}\int_0^\infty [\pi I_0(t)]^{n-2m+1}[K_0(t)]^{n+2m-1}t^{n-2k-1}\D t=0,\label{eq:Y_seq}
\end{align}where    $n-1\geq2k\geq2$.
 \eor\end{remark}\begin{remark}Before writing this paper, we constructed alternative (and actually simpler) proofs of the B$^3$G sum rule \eqref{eq:Z_vanish} and its conjugate \eqref{eq:Y_seq}, via a vanishing contour integral\begin{align}
\int_{-i\infty}^{i\infty}z^{m}[H_0^{(1)}(z)H_0^{(2)}(z)]^n\D z=0,\quad m\in\mathbb Z\cap[0,n-1),\label{eq:BBBG_Hankel}
\end{align}where  $ H_0^{(1)}$, $ H_0^{(2)}$ are  cylindrical Hankel functions  and the contour can be closed to the right \cite[cf.][\S7.2]{Watson1944Bessel}. In spite of this complex analytic shortcut, we encountered ``infrared divergence'' (singular behavior of the integrand   as $ |z|\to\infty$) in \eqref{eq:BBBG_Hankel}, when we attempted to raise the power $m$ to higher values and calculate  $ \IKM(a,b;m)$ for  $ m+1\geq (a+b)/2$, which is the situation occurring in the Broadhurst--Mellit integer sequence (see \S\ref{subsec:B_int_seq} below). As we will soon see, the Hilbert transform method will succeed where  the contour integration approach fails.    \eor\end{remark}\subsection{Broadhurst--Mellit integer sequence (Crandall numbers)\label{subsec:B_int_seq}}Let \begin{align}A(n):=\left(2/\pi\right)^42^{2n-1}[\pi^{2}\IKM(3,5;2n-1)-\IKM(1,7;2n-1)],\quad \forall n\in\mathbb Z_{>0}\end{align} be the sequence defined  in \eqref{eq:An}. Despite (erstwhile) conjectural integrality, this now becomes  entry   A262961 in the On-line Encyclopedia of Integer Sequences (OEIS) \cite[][under the title ``Crandall numbers'', in homage  to Richard E. Crandall (1947--2012), who made seminal contributions to the computation of Bessel moments]{OEISA262961}, along with the following comments (at the time of writing, in  April 2017):
\begin{quote}\textit{``Anton Mellit and David Broadhurst define the sequence to be the} \textbf{\textit{`round'}} (emphasis added) \textit{of the integral, with the conjecture that this rounding is exact. No one seems to know how to prove that any of the integrals gives a rational number, let alone an integer.''}\end{quote}

By \eqref{eq:Y_seq}, we see that $ Y_{8,2}=0$  implies  the following vanishing identity\begin{align}
A(1):=\frac{32}{\pi^{4}}\int_0^\infty\left\{ [\pi I_0(t)]^2-[K_0(t)]^2 \right\}I_0(t)[K_0(t)]^5t\D t={}&0,
\end{align}  which was first conjectured by Broadhurst in  \cite[][(148)]{Broadhurst2016}, after collaboration with Mellit.  The rest of the sequence $ A(n+1),n\in\mathbb Z_{>0}$ in Conjecture \ref{conj:B_int_seq} is characterized by the theorem below.\begin{theorem}[Broadhurst--Mellit integer sequence, \textit{viz.}\ Crandall numbers]For all $n\in\mathbb Z_{>1}$, we have\begin{align}A(n):=\left(\frac{2}{\pi}\right)^4
\int_0^\infty\left\{[\pi I_0(t)]^2 - [K_0(t)]^2\right\} I_0(t)[K_0(t)]^5\,(2t)^{2n-1} \D t
\in\mathbb Z_{>0}.
\end{align}Furthermore, the following explicit formula holds for $ n\in\mathbb Z_{>0}$:\begin{align}
A(n+1)={}&\frac{1}{2^{4(n-1)}}\sum_{m=1}^{n} \sum_{\ell=1}^{m}\sum_{k=1}^{\ell} \frac{ [(2 n-2m)!]^3}{ [(n-m)!]^4}\frac{ [(2 m-2\ell)!]^3}{ [(m-\ell)!]^4}\frac{ [(2 \ell-2k)!]^3}{ [( \ell-k)!]^4}\frac{ [(2k-2)!]^3}{ [(k-1)!]^4}.\label{eq:An_explicit}
\end{align}\end{theorem}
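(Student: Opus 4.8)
The plan is to reduce $A(n)$ to a single Hilbert moment of the fourth ladder $\eta_4$ and then to evaluate that moment by a convolution recursion that descends the ladder one rung at a time. For the reduction I would expand $\eta_4=2\zeta_2\eta_2$ using $\zeta_2=\iota^2\kappa^2-\kappa^4$ and $\eta_2=-2\iota\kappa^3\sgn$ to get $\eta_4=-4(\iota^3\kappa^5-\iota\kappa^7)\sgn$, so that on $(0,\infty)$ the integrand of $A(n)$ is precisely $-\tfrac1{4\pi}\eta_4(t)\,(2t)^{2n-1}$. Since $\eta_4$ is odd and $\varpi_{2n-1}$ is odd, I would fold the half-line integral into the full-line pairing, obtaining
\begin{align}
A(n)=-\frac{2^{2n}}{\pi^5}\,\langle\eta_4,\varpi_{2n-1}\rangle .
\end{align}
In particular $A(1)=0$ re-expresses the conjugate sum rule $Y_{8,2}=0$.

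The engine of the proof is the recursion, valid for all $\ell\geq1$ and all $j\geq0$,
\begin{align}
\langle\eta_{\ell+1},\varpi_j\rangle=\frac1\pi\sum_{i=0}^{j-1}\langle\kappa^2,\varpi_i\rangle\,\langle\eta_\ell,\varpi_{j-1-i}\rangle .
\end{align}
To establish it I would start from $\eta_{\ell+1}=\zeta_\ell\eta_1+\eta_\ell\zeta_1=-\zeta_\ell\kappa^2+\eta_\ell\HT(\kappa^2)$ (recall $\eta_1=-\kappa^2$ and $\zeta_1=\HT(\kappa^2)$), apply $\ParH(\eta_\ell\varpi_j,\kappa^2)$ to transfer the Hilbert transform onto $\eta_\ell\varpi_j$, and then expand $\HT(\eta_\ell\varpi_j)$ by the moment formula \eqref{eq:HT_pow}. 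The correction term of \eqref{eq:HT_pow} contributes a slowly decaying piece $\langle\zeta_\ell\kappa^2,\varpi_j\rangle$ which \emph{cancels exactly} against the $-\zeta_\ell\kappa^2$ contribution, leaving only the convolution of $\eta_\ell$-moments against $\kappa^2$-moments. This cancellation is what defuses the ``infrared divergence'' flagged in the remark after Theorem \ref{thm:B3G}: the dangerous $t^{-\ell}$ tail of $\zeta_\ell$ never appears by itself but always multiplied by the exponentially small $\kappa^2$, so every integral in sight converges absolutely and no regularization is needed.

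The only external input is the classical even moment of $\kappa^2$, namely
\begin{align}
\langle\kappa^2,\varpi_{2j}\rangle=2\int_0^\infty K_0(t)^2t^{2j}\D t=\frac{\pi^2}{2\cdot 64^{\,j}}\,\frac{[(2j)!]^3}{(j!)^4},
\end{align}
a gamma-function evaluation recorded in \cite{Bailey2008}. Writing $c_j:=[(2j)!]^3/(j!)^4$ and applying the recursion three times, from $\eta_4$ down through $\eta_3,\eta_2$ to $\eta_1=-\kappa^2$, collapses $\langle\eta_4,\varpi_{2n-1}\rangle$ into a fourfold Cauchy convolution of these moments. The index bookkeeping forces the four summation variables to add up to $n-2$, the powers of $\pi$ cancel against the $(2/\pi)^4$ and the $\pi^{-5}$ above, and the powers of two ($64^{-\bullet}$ from the atoms against $2^{2n}$) combine to the factor $2^{-4(n-2)}$; reindexing the ordered triple $1\le k\le\ell\le m\le n$ by its four gaps (which run over all compositions of the total into four nonnegative parts) then reproduces \eqref{eq:An_explicit} after the shift $n\mapsto n+1$. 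Positivity $A(n)>0$ is immediate since every $c_j>0$.

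The integrality assertion $A(n)\in\mathbb Z$ is the step I expect to be the main obstacle, since \eqref{eq:An_explicit} only exhibits $A(n)$ as $2^{-4(n-2)}$ times a positive fourfold convolution $(c\ast c\ast c\ast c)(n-2)$, and the required $2$-adic divisibility cannot be read off term by term: with $v_2(c_j)=2j+s_2(j)$ (where $s_2$ is the binary digit sum), each summand has $2$-adic valuation only $2(n-2)+\sum s_2\ge 2(n-2)$, which falls well short of $4(n-2)$, and because all summands are positive there is no sign cancellation to exploit. The divisibility must therefore emerge from a level-by-level pairing — reducing modulo successive powers of $2$ and checking that the terms of minimal valuation occur in even number at each level. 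In practice I would organize this through the hypergeometric recurrence $(j+1)c_{j+1}=8(2j+1)^3c_j$, whose factor $8=2^3$ is the source of the powers of two; lifting it by creative telescoping to a polynomial-coefficient recurrence for $(c\ast c\ast c\ast c)(N)$ should yield an integer recurrence for $A(n)$ with integer seed $A(2)=1$, from which integrality follows by induction. Controlling the $2$-adic content of that recurrence is the part on which I would spend the most effort.
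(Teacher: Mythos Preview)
Your reduction of $A(n)$ to the odd moment $\langle\eta_4,\varpi_{2n-1}\rangle$ and the one-step recursion
\[
\langle\eta_{\ell+1},\varpi_j\rangle=\frac{1}{\pi}\sum_{i=0}^{j-1}\langle\kappa^2,\varpi_i\rangle\,\langle\eta_\ell,\varpi_{j-1-i}\rangle
\]
are correct, and the cancellation of the $\zeta_\ell\kappa^2$ contribution against the leading term of the moment expansion of $\HT(\eta_\ell\varpi_j)$ is exactly the mechanism the paper exploits. The paper runs the same descent but phrases each step as a separate Parseval identity ($\ParH(\eta_3\varpi_1,\kappa^2\varpi_{2n})$, then $\ParH(2\iota\kappa^3\sgn,\kappa^2\varpi_{2m})$, then $\ParH(\kappa^2\varpi_1,\kappa^2\varpi_{2(\ell-1)})$); your single recursion is a clean unification of those three calculations. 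So for the explicit formula \eqref{eq:An_explicit} and the positivity $A(n)>0$, your plan is essentially the paper's proof.

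The integrality step, however, is a genuine gap. Your hope of lifting $(j+1)c_{j+1}=8(2j+1)^3c_j$ by creative telescoping to an integer recurrence for the fourfold convolution is plausible but unexecuted, and you already note that controlling the $2$-adic content of such a recurrence is the crux. The paper does not go that route. Instead it exploits the \emph{multiplicative} structure you have in hand but do not use: the fourfold convolution is a perfect square,
\[
A(n+1)=[x^{n-1}]\Bigl(\sum_{\ell\ge1}\alpha_\ell x^{\ell-1}\Bigr)^{2},\qquad
\alpha_\ell:=\frac{1}{2^{4(\ell-1)}}\sum_{k=1}^{\ell}\frac{[(2\ell-2k)!]^3}{[(\ell-k)!]^4}\,\frac{[(2k-2)!]^3}{[(k-1)!]^4},
\]
so it suffices to prove $\alpha_\ell\in\mathbb Z$. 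Here the paper identifies $\alpha_\ell=[(\ell-1)!]^2 D_{\ell-1}/4^{\ell-1}$, where $D_j$ is the $j$-th Domb number, and then invokes Rogers' modular identity
\[
{}_3F_2\!\left(\begin{smallmatrix}\tfrac13,\tfrac12,\tfrac23\\1,1\end{smallmatrix}\,\middle|\,\tfrac{27u^2}{4(1-u)^3}\right)=(1-u)\sum_{n\ge0}\frac{D_n}{4^n}u^n
\]
to show that $\alpha_{\ell+1}-\ell^2\alpha_\ell\in\mathbb Z$; with the seed $\alpha_1=1$ this gives $\alpha_\ell\in\mathbb Z$ by induction. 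The decisive ingredients you are missing are therefore (i) the square factorization, which halves the $2$-adic burden, and (ii) the connection to Domb numbers together with Rogers' transformation, which supplies the integer recurrence directly rather than through a Zeilberger-type search whose denominators you would still have to clear.
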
\begin{proof}We recall the following  Bessel moments from \cite[][(7)]{Bailey2008} (see also \cite{Ouvry2005,BorweinBruno2007}):\begin{align}
\int_0^\infty [K_0(t)]^{2}t^{2n}\D t=\frac{\pi ^2 [(2 n)!]^3}{4^{3 n+1} (n!)^4},\quad \forall n\in\mathbb Z_{\geq0},
\end{align}which enable us to compute the following Hilbert transforms with the aid of \eqref{eq:HT_pow} and \eqref{eq:HT_KK}:\begin{align}
\HT(\kappa^2 \varpi_{2n})={}& \varpi_{2n}\HT(\kappa^2)-\frac{1}{\pi}\sum_{k=0}^{2n-1}  \varpi_k\int_{-\infty}^\infty \xi^{2n-1-k}[\kappa(\xi)]^{2}\D\xi\notag\\={}& \iota\kappa \varpi_{2n}\sgn-\frac{\pi}{2}\sum_{m=1}^{n}  \frac{ [(2 n-2m)!]^3}{4^{3 (n-m)} [(n-m)!]^4}\varpi_{2m-1},\quad \forall n\in\mathbb Z_{>0}.
\end{align}Spelling out  $ \ParH(-3\iota^2\kappa^4\varpi_{1}+\kappa^6\varpi_{1},\kappa^2\varpi_{2n})$ [\textit{i.e.}\ $ \ParH(\eta_3\varpi_1,\kappa^2\varpi_{2n})$], we see that $A(n+1),\forall n\in\mathbb Z_{>0}$ is expressible as a finite sum of  Bessel moments involving only six Bessel functions:\begin{align}
A(n+1)=\frac{2^{2(n+1)}}{\pi^{4}}\sum_{m=1}^{n}  \frac{ [(2 n-2m)!]^3}{4^{3 (n-m)} [(n-m)!]^4}\int_0^\infty \{3[\pi I_{0}(t)]^2-[K_0(t)]^2\}[K_0(t)]^4t^{2m}\D t.
\end{align}By $\ParH(2\iota \kappa^3\sgn,\kappa^2\varpi_{2m})$, we arrive at further reduction:\begin{align}&
\int_0^\infty \{3[\pi I_{0}(t)]^2-[K_0(t)]^2\}[K_0(t)]^4t^{2m}\D t\notag\\={}&\pi^{2}\sum_{\ell=1}^{m}  \frac{ [(2 m-2\ell)!]^3}{4^{3 (m-\ell)} [(m-\ell)!]^4}\int_0^\infty I_0(t)[K_0(t)]^3t^{2\ell-1}\D t.
\end{align}  With  $ \HT(\kappa^2 \varpi_1)=\iota\kappa \varpi_1\sgn$ and  $\ParH(\kappa^{2}\varpi_{1},\kappa^2\varpi_{2(\ell-1)})$, we evaluate the remaining Bessel moments involving four Bessel functions:\begin{align}
\int_0^\infty I_0(t)[K_0(t)]^3t^{2\ell-1}\D t=\frac{\pi^{2}}{4^{3\ell-1}}\sum_{k=1}^{\ell}  \frac{ [(2 \ell-2k)!]^3}{ [( \ell-k)!]^4}\frac{ [(2k-2)!]^3}{ [(k-1)!]^4}.\label{eq:IKKK_moments}
\end{align}
So far, we have verified  \eqref{eq:An_explicit}.

Next, we show that \begin{align}
\alpha_\ell:=\frac{1}{2^{4(\ell-1)}}\sum_{k=1}^{\ell}  \frac{ [(2 \ell-2k)!]^3}{ [( \ell-k)!]^4}\frac{ [(2k-2)!]^3}{ [(k-1)!]^4}\in\mathbb Z,\quad \forall \ell\in\mathbb Z_{>0}.
\end{align}Towards this end, we  first recall   from  \cite[][(55) and (56)]{Bailey2008} the following formula\begin{align}
\int_0^\infty I_0(t)[K_0(t)]^3t^{2\ell-1}\D t=\frac{\pi^{2}}{16}\left[ \frac{(\ell-1)!}{4^{\ell-1}} \right]^2\sum^{\ell-1}_{k=0}{\ell-1\choose k}^2{2(\ell-1-k)\choose\ell-1-k}{2k\choose k},
\end{align}which combines with  \eqref{eq:IKKK_moments} into\footnote{Na\"ively, upon observing that $ (n!)^2/2^n\in\mathbb Z,\forall n\in\mathbb Z_{\geq4}$ and $ D_n\in\mathbb Z,\forall n\in\mathbb Z_{\geq0}$, we obtain $ 2^{\ell-1}\alpha_\ell\in\mathbb Z,\forall \ell\in\mathbb Z_{>0}$, at best. The divisibility statement   $ 4^{n}\mid(n!)^2D_n,\forall n\in\mathbb Z_{\geq0}$ is thus deeper than these na\"ive observations. In our proof of the integrality  $ \alpha_\ell\in\mathbb Z,\forall \ell\in\mathbb Z_{>0}$, we need  Rogers' work on modular forms \cite{Rogers2009}, which in turn, was inspired by Bertin's studies of modular parametrizations for certain families of  Calabi--Yau manifolds \cite{Bertin2008}.}
\begin{align}\label{eq:a_l_Domb}
\alpha_\ell=\frac{[(\ell-1)!]^{2}}{4^{\ell-1}}\sum^{\ell-1}_{k=0}{\ell-1\choose k}^2{2(\ell-1-k)\choose\ell-1-k}{2k\choose k}=\frac{[(\ell-1)!]^{2}}{4^{\ell-1}}D_{\ell-1},
\end{align}where $D_j,j\in\mathbb Z_{\geq0}$ is called the $j$-th Domb number in combinatorics.
In \cite[][Theorem 3.1]{Rogers2009}, Rogers has shown that the following identity holds for $|u|$ sufficiently small:\begin{align}
_3F_2\left(\left.\begin{array}{c}
\frac{1}{3},\frac{1}{2},\frac{2}{3} \\1,1 \\
\end{array}\right|\frac{27u^{2}}{4(1-u)^{3}}\right)=(1-u)\sum_{n=0}^\infty \frac{D_n}{4^n}u^n.\label{eq:Rogers2009}
\end{align}Here, the  generalized hypergeometric series $ _pF_q$ is defined as \begin{align}{_pF_q}\left(\left.\begin{array}{c}
a_{1},\dots,a_p \\[4pt]
b_{1},\dots,b_q \\
\end{array}\right| x\right):=\sum_{n=0}^\infty\frac{(a_1)_n\cdots(a_p)_n}{(b_1)_n\cdots(b_q)_n}\frac{x^n}{n!},\end{align} where the Pochhammer symbol represents the rising factorial\begin{align*}(a)_{n}:=\begin{cases}1, & n=0 \\
a(a+1)\cdots(a+n-1), & n\in\mathbb Z_{>0} \\
\end{cases} ;\quad (a)_{n}:=\frac{\Gamma(a+n)}{\Gamma(a)},\quad a\notin\mathbb  Z_{\leq0},\end{align*}  for $ \{b_1,\dots,b_q\}\cap\mathbb  Z_{\leq0}=\emptyset$. In Rogers' identity \eqref{eq:Rogers2009}, the coefficient of $u^n$ for $n\in\mathbb Z_{>0}$ is equal to \begin{align}\frac{D_n}{4^n}-\frac{D_{n-1}}{4^{n-1}},\end{align}according to the right-hand side. Meanwhile, the Taylor expansion of the left-hand side reads\begin{align}
_3F_2\left(\left.\begin{array}{c}
\frac{1}{3},\frac{1}{2},\frac{2}{3} \\1,1 \\
\end{array}\right|\frac{27u^{2}}{4(1-u)^{3}}\right)=1+3\sum_{n=1}^\infty[(2n-1)!!]^2{3n-1\choose2n}\frac{1}{(n!2^n)^2}\frac{u^{2n}}{(1-u)^{3n}},
\end{align}where the double factorial is defined as $ (2n-1)!!:=(2n)!/(n!2^n)\in\mathbb Z$ for $n\in\mathbb Z_{>0}$. As we compute the contribution from the aforementioned Taylor coefficients to\begin{align}(n!)^{2}
\left(\frac{D_n}{4^n}-\frac{D_{n-1}}{4^{n-1}}\right)=\alpha_{n+1}-n^{2}\alpha_{n},
\end{align}we are gathering finitely many  summands, each of which is  an integer multiple of $ (k!!)^{2}\in\mathbb Z$ for a certain odd positive integer $k$ less than $n$.  Therefore, we have $ \alpha_1=1$ and  $ \alpha_{\ell+1}-\ell^{2}\alpha_{\ell}\in\mathbb Z$ for all $\ell\in\mathbb Z_{>0}$.

Finally, by discrete convolution, we see that   $A(n+1)$ is the coefficient of  $x^{n-1}$  in  the polynomial\begin{align}
\left(\sum_{\ell=1}^{n+1}\alpha_\ell x^{\ell-1}\right)^2=\left[\sum_{\ell=0}^n\frac{(\ell!)^2D_\ell }{4^{\ell}}x^\ell\right]^2\in\mathbb Z[x],
\end{align} so $A(n+1)\in\mathbb Z $ must hold.  \end{proof}
\begin{remark}More generally, we define the positive integer  $ \alpha_n^{[m]}$ as the  coefficient of $x^{n-1}$ in the polynomial $\left(\sum_{\ell=1}^{n+1}\alpha_\ell x^{\ell-1}\right)^m\in\mathbb Z[x]$, for every $m\in\mathbb Z_{>0}$. By repeated applications of the recursions for Hilbert ladders [see \eqref{eq:eta_ell_add} and \eqref{eq:zeta_ell_add}], we can show that \begin{align}
\alpha_n^{[m]}=\frac{4}{\pi^{2m+1}}\sum_{\ell=1}^{m+1}(-1)^{\ell-1} {2m\choose{2\ell-1}}\int_0^\infty [\pi I_0(t)]^{2(m-\ell)+1}[K_0(t)]^{2(m+\ell)-1}(2t)^{2(n+m)-3}\D t.\label{eq:alpha_n_m}
\end{align} Thus, the Broadhurst--Mellit integer sequence $ A(1)=0,A(n+1)=\alpha_{n}^{[2]},n\in\mathbb Z_{>0}$ is just a special case within an infinite family of (linear combinations for) Bessel moments, preceded by \begin{align}
\alpha_{n}^{\vphantom{[1]}}\equiv\alpha_{n}^{[1]}=\frac{8}{\pi^{2}}\int_0^\infty I_0(t)[K_0(t)]^{3}(2t)^{2n-1}\D t\in\mathbb Z_{>0}
\end{align}and followed by\begin{align}
\alpha_{n}^{[3]}=\frac{8}{\pi^{6}}\int_0^\infty\left\{[\pi I_0(t)]^2 -3 [K_0(t)]^2\right\} \left\{3[\pi I_0(t)]^2 - [K_0(t)]^2\right\} I_0(t)[K_0(t)]^7\,(2t)^{2n+3} \D t\in\mathbb Z_{>0},
\end{align}for $ n\in\mathbb Z_{>0}$. \eor\end{remark}
\begin{remark}For $ \IKM(a,b;c)$ satisfying $c\equiv0\pmod2 $ and $ a+b\equiv2\pmod4$, we also have a family of  integer sequences that runs parallel to \eqref{eq:alpha_n_m}. With the observation that \begin{align}
\frac{ [(2 n)!]^3}{2^{4n}(n!)^4}=\frac{1}{2^{2n}}[(2n-1)!!]^{2}{2n\choose n}\in\frac{1}{2^{2n}}\mathbb Z,\quad \forall n\in\mathbb Z_{\geq0},
\end{align}we define $ \beta^{[m]}_n$ as the  coefficient of $x^{n-1}$ in the polynomial\begin{align}
\left[\sum_{\ell=0}^n\frac{(\ell!)^2D_\ell }{4^{\ell}}x^\ell\right]^{m-1} \sum_{k=0} ^n\frac{ [(2 k)!]^3}{2^{4k}(k!)^4}x^{k}\in\frac{1}{2^{2(n-1)}}\mathbb Z[x],
\end{align}  for every $m\in\mathbb Z_{>0}$. According to the discrete convolutions of Bessel moments that descend from the recursions for Hilbert ladders, we have \begin{align}
\beta^{[m]}_n=\frac{4}{\pi^{2m}}\sum_{\ell=1}^{m}(-1)^{\ell-1} {2m-1\choose{2\ell-1}}\int_0^\infty [\pi I_0(t)]^{2(m-\ell)}[K_0(t)]^{2(m+\ell-1)}(2t)^{2(n+m-2)}\D t\in\frac{1}{2^{2(n-1)}}\mathbb Z_{>0}
\end{align}for  $ m,n\in\mathbb Z_{>0}$. We can even combine the foregoing statements about $\alpha_n ^{[m]}$ and $ \beta_n^{[m]}$ into a more compact form, as follows:\begin{align}
\frac{2^{1+2(n-1)[1-(-1)^{M}]}}{\pi^{M+1}}\int_0^\infty \frac{[\pi I_0(t)+i K_{0}(t)]^{M}-[\pi I_0(t)-i K_{0}(t)]^{M}}{i}[K_0(t)]^{M}(2t)^{2n+M-3}\D t\in\mathbb Z_{>0}
\end{align}for all $ M,n\in\mathbb Z_{>0}$. This confirms a recent empirical observation by Broadhurst and Roberts \cite[][Conjecture 2]{Broadhurst2017Paris}. \eor\end{remark}

To summarize, we have demonstrated that many sum rules of Bessel moments indeed issue from the three fundamental laws \ref{itm:HT-1}--\ref{itm:HT-3} of  Hilbert transforms.  We remind our readers that the methods developed in this note do not exhaust all possible linear relations for Feynman diagrams in two-dimensional quantum field theory. For example, the sum rule      $ 9\pi^2\IKM(4,4;1)-14\IKM(2,6;1)=0$   \cite[conjectured in][(147)]{Broadhurst2016} is not a simple consequence of Hilbert transforms, and its recent verification  in \cite[][\S5.2]{WEF} involves heavy use of contour integrations over modular forms.
 \subsection*{Acknowledgments}In early 2017, I wrote up this paper  in Beijing, mostly drawing on my research notes prepared at Princeton during 2012 and 2013. I thank Prof.\ Weinan E for arranging my stays in Princeton and Beijing, as well as organizing a seminar on constructive quantum field theory at Princeton.

After completion of the initial draft of this article, I received from Dr.\ David Broadhurst his slides for recent talks  \cite{Broadhurst2017Paris,Broadhurst2017CIRM,Broadhurst2017Higgs} on Bessel moments, which set his conjectures in a wider context. I thank Dr.\ Broadhurst for his constant encouragements and incisive comments on this project.

I am indebted to an anonymous referee for thoughtful suggestions on improving the presentation of this paper.

In January 2013, I benefited from fruitful discussions with Prof.\ Jon Borwein on his previous contributions to Bessel moments and elliptic integrals; I was equally grateful to his friendly communications on my then-unpublished work related to Hilbert transforms.  I dedicate this work to his memory.


%

\end{document}